\documentclass[11pt, oneside]{amsart}       
\usepackage{geometry}                        
\geometry{letterpaper}                           
\usepackage[parfill]{parskip}            
\usepackage{graphicx}                
                            \usepackage{amsthm}    
\newtheorem{theorem}{Theorem}[section]
\newtheorem{definition}{Definition}[section]
\newtheorem{lemma}[theorem]{Lemma}
\newtheorem{corollary}[theorem]{Corollary}
\theoremstyle{remark}
\newtheorem*{remark}{Remark}
\usepackage{tcolorbox}
\newtheorem*{example}{Example}



\title{THE TOPOLOGY OF MEROMORPHIC LOOP SPACES OF SMOOTH CURVES}
\author{EMILE BOUAZIZ}

\begin{document}
\begin{abstract} We study the topology of the $\mathbb{C}$-points of the algebraic loop space of a smooth affine curve. In particular we will compute their sets of connected components. \end{abstract}
\maketitle
\section{acknowledgements}I thank Andrew MacPherson and Benjamin Hennion for numerous useful conversations. Further I thank Ian Grojnowski for shaping my entire outlook on Algebraic Geometry. Finally I thank MPIM Bonn for wonderful working conditions. \section{INTRODUCTION} If $M$ is, say, a manifold, then we can consider the free loop space, $LM=Map(S^{1},M)$, of unbased maps from a circle into $M$. If $M$ is connected then the set of connected components of $LM$ is identified with the fundamental group $\pi_{1}(M)$. There is an algebro-geometric version of $LM$, gotten heuristically by replacing the circle $S^{1}$ with a \emph{formal punctured disc}, $spec(\mathbb{C}((z)))$. The resulting object is an ind-scheme if we take our target to be affine. \\ \\The question of the topology of the $\mathbb{C}$-points of this ind-scheme now arises fairly naturally. We address this in the case of a smooth affine curve. To the best of our knowledge, the first non-trivial such computation is due to Contou-Carr\`{e}re, who studied the algebraic loop space of $\mathbb{G}_{m}$ in his work on what is now called the Contou-Carr\`{e}re symbol, (cf. [2]). In the case of $\mathbb{G}_{m}$ the answer turns out to \emph{agree} with the topological analogue. We will see that this is very much not the case in general. \section{BASICS} For the reader unfamiliar with algebraic loop spaces we include some background here. For more a more detailed introduction we refer the reader to the work of Kapranov and Vasserot  in [3]. \begin{definition} \begin{itemize}\item If $S=spec(A)$ is an affine scheme then we write $\mathcal{D}_{S}$ for the affine scheme $spec(A[[z]])$ and $\mathcal{D}_{S}^{*}$ for the affine scheme $spec(A((z)))$. We write $\mathcal{D}^{*}_{\mathbb{C}}$ for $\mathcal{D}^{*}_{spec(\mathbb{C})}$, and we sometimes shorten this further to just $\mathcal{D}^{*}$. \item If $X$ is a scheme then we define the pre-sheaves, $X(\mathcal{D})$ and $X(\mathcal{D}^{*})$, respectively by $X(\mathcal{D})(S)=X(\mathcal{D}_{S})$ and $X(\mathcal{D}^{*})=X(\mathcal{D}^{*}_{S}).$ We refer to them as the \emph{arc space} of $X$ and the \emph{loop space} of $X$ respectively. We will sometimes refer to them as \emph{holomorphic} (resp. \emph{meromorphic}) loops into $X$. The space $X(\mathcal{D})$ is a closed sub-space of $X(\mathcal{D}^{*})$. \item There is a \emph{covering action} of the monoid $\mathbb{N}^{\times}$ on the space $X(\mathcal{D}^{*})$ gotten by pre-composition with the etale covers, $z\mapsto z^{n}$. \end{itemize}\end{definition} We will briefly summarise the representability properties of these pre-sheaves: 
\begin{lemma}\begin{itemize}\item If $X$ is a scheme then the pre-sheaf $X(\mathcal{D})$ is representable by a scheme, affine if $X$ is.\item If $X$ is an affine scheme then  $X(\mathcal{D}^{*})$ is an ind-affine scheme, i.e. a filtered colimit (taken inside the pre-sheaf category) of affine schemes with transition maps closed inclusions.\end{itemize} \end{lemma}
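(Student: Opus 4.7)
The plan is to handle the two assertions separately, in each case constructing the representing (ind-)scheme directly via coefficient expansion.

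For the first assertion, begin with $X = spec(B)$ affine, and fix a presentation $B = \mathbb{C}[x_{1}, \dots, x_{n}]/(f_{1}, \dots, f_{m})$. An $S = spec(A)$-point of $X(\mathcal{D})$ is an $n$-tuple of power series $a_{i}(z) = \sum_{k \ge 0} a_{i,k} z^{k} \in A[[z]]$ satisfying $f_{j}(a_{1}(z), \dots, a_{n}(z)) = 0$ for every $j$. Expanding $f_{j}$ applied to these power series yields, for each $\ell \ge 0$, a universal polynomial $P_{j,\ell}$ in finitely many of the coefficients $a_{i,k}$, and the vanishing $f_{j}(a_{i}(z)) = 0$ is equivalent to the simultaneous vanishing of all $P_{j,\ell}$. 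Thus $X(\mathcal{D})$ is represented by $spec(\mathbb{C}[x_{i,k} : 1 \le i \le n,\, k \ge 0]/(P_{j,\ell}))$, which is affine. For non-affine $X$, cover by affine opens $U_{\alpha}$; factorisation of $\mathcal{D}_{S} \to X$ through $U_{\alpha}$ is an open condition on $S$, tested via the closed point $z = 0$ of $\mathcal{D}_{S}$, and this allows one to glue the $U_{\alpha}(\mathcal{D})$ along their overlaps to a scheme representing $X(\mathcal{D})$.

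For the second assertion, take $X = spec(B)$ affine. The fundamental observation is that every element of $A((z))$ has finite pole order, so filtering by pole order gives $X(\mathcal{D}^{*}) = \bigcup_{N} F_{N}$, where $F_{N}$ is the sub-presheaf of tuples $(a_{i})$ with each $a_{i} \in z^{-N} A[[z]]$. The coefficient-expansion argument from the first part, now using generators $x_{i,k}$ for $k \ge -N$ and clearing denominators in the relations $f_{j}(a_{i}(z))=0$, represents each $F_{N}$ as an affine scheme. The inclusion $F_{N} \hookrightarrow F_{N+1}$ corresponds to quotienting by the ideal generated by $x_{i,-(N+1)}$, hence is a closed immersion, and the colimit $\varinjlim_{N} F_{N} = X(\mathcal{D}^{*})$ gives the required ind-affine structure.

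The step I would expect to be most finicky is the gluing in the first part for non-affine $X$: one must check carefully that the sub-presheaves cut out by factorisation through affine opens really do define open subschemes of the candidate representing object, and that the glueing data is compatible on triple overlaps. By contrast, the loop space case is essentially formal once the pole-order filtration is in place, since the discreteness of pole order is precisely what allows the filtered colimit to exist on the nose with closed transition maps.
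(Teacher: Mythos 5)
Your handling of the two affine cases is correct: the coefficient-expansion argument for $X(\mathcal{D})$ and the pole-order filtration $F_{N}$ with closed transition maps for $X(\mathcal{D}^{*})$ are both sound, and they are in essence the explicit form of what the paper does (the paper reduces to $\mathbb{A}^{1}$ using compatibility with limits and cites Kapranov--Vasserot). Two minor caveats: you fix a \emph{finite} presentation of $B$, while the lemma is stated for arbitrary affine $X$ --- with infinitely many generators there is no uniform pole bound, so the loop space must be exhibited as a colimit over the directed poset of pole-bound functions on the generating set rather than over $\mathbb{N}$; also, no ``clearing of denominators'' is needed, since each Laurent coefficient of $f_{j}(a_{1}(z),\dots,a_{n}(z))$ is already a polynomial in finitely many of the $x_{i,k}$.

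The genuine gap is in the non-affine case of the first assertion, and it is not where you locate it. The triple-overlap cocycle checks are routine, and your criterion that factorisation through $U_{\alpha}$ is detected at $z=0$ is correct (it follows from the fact that $z$ lies in the Jacobson radical of $A[[z]]$, so every point of $\mathcal{D}_{S}$ specialises into $V(z)$); this makes each $U_{\alpha}(\mathcal{D})$ an open subfunctor and the glued scheme $Y$ exists. What is missing is the proof that $Y$ represents $X(\mathcal{D})$, and the problematic direction is: given an $S$-point of $Y$, i.e.\ an open cover $S=\bigcup_{i}S_{i}$ together with compatible arcs $\gamma_{i}:\mathcal{D}_{S_{i}}\rightarrow X$, produce an arc $\mathcal{D}_{S}\rightarrow X$. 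This is precisely the assertion that $S\mapsto X(\mathcal{D}_{S})$ is a Zariski sheaf, and it is not formal, because power series do not commute with localisation: $A_{f}[[z]]\neq (A[[z]])_{f}$, the latter having bounded denominators (for instance $\sum_{k}x^{-k}z^{k}$ lies in $\mathbb{C}[x]_{x}[[z]]$ but not in $(\mathbb{C}[x][[z]])_{x}$). Hence $\mathcal{D}_{S_{i}}\rightarrow\mathcal{D}_{S}$ is a completion, not an open immersion, the $\mathcal{D}_{S_{i}}$ do not form an open cover of $\mathcal{D}_{S}$, and the $\gamma_{i}$ cannot simply be glued as morphisms of schemes. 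What the $\gamma_{i}$ do glue to is a compatible system of jets $spec(A[z]/z^{n+1})\rightarrow X$ for every $n$ (truncation \emph{does} commute with localisation, so there the covers are honest open covers), and the remaining, genuinely non-trivial, step is to algebraize this formal datum back to a morphism $spec(A[[z]])\rightarrow X$. For affine $X$ this is trivial; for general quasi-compact quasi-separated $X$ it is a theorem (Bhatt's algebraization theorem, identifying $X(A[[z]])$ with $\lim_{n}X(A[z]/z^{n+1})$), which is exactly why careful treatments construct the arc space of a non-affine $X$ as an inverse limit of jet schemes along affine transition maps rather than by gluing the $U_{\alpha}(\mathcal{D})$.
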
 \begin{proof} This is well known, we refer the reader to \cite{KV}. One reduces to the case of $\mathbb{A}^{1}$ using compatability with limits and then deals with the $\mathbb{A}^{1}$ case explicitly. \end{proof}  We give a couple of examples of loop spaces of affine curves. \begin{example}\begin{itemize}\item We will briefly describe the loop space of $\mathbb{A}^{1}$. It can easily be seen to be the ind-scheme $\underrightarrow{\lim}_{n}\mathbb{A}^{[-n,\infty)}$. A Laurent power series $\gamma(z)=\sum_{i\geq -n}\gamma_{i}z^{i}$ corresponds to the element $(\gamma_{i})_{i}\in\mathbb{A}^{[-n,\infty)}$ in the obvious way. The sub-space of arcs is $\mathbb{A}^{[0,\infty)}$. Endowing the $\mathbb{C}$-points of this space with the ind-topology we note that it is contractible. We shall see below that, unsurprisingly, $\mathbb{A}^{1}$ is the only smooth curve for which this is the case. It is instructive to consider the following $\mathbb{A}^{1}$-family of loops into $\mathbb{A}^{1}$. Define $\gamma: \mathbb{A}^{1}\rightarrow\mathbb{A}^{1}(\mathcal{D}^{*})$ as follows: by the functor of points definition it corresponds to a map, $\mathcal{D}^{*}_{\mathbb{A}^{1}}\rightarrow\mathbb{A}^{1}$, i.e. to a function on the affine scheme $\mathcal{D}^{*}_{\mathbb{A}^{1}}$, and so to an element of the algebra $\mathbb{C}[t]((z))$. We take this element to be $z+tz^{-1}$. The fibre at $t=0$ is holomorphic but the other fibres have poles. We will see below that $\mathbb{A}^{1}$ is the only smooth curve for which this is possible. Indeed for every other smooth curve, $X$, the complex points of the arc space are \emph{open} in the complex points of the loop space.\item We consider now the case of $\mathbb{G}_{m}$. According to work of Contou-Carr\`{e}re, [2], if $A$ is a $\mathbb{C}$-algebra, then an invertible element of $A((z))$ admits a unique expression of the form, $$\alpha(z)=\alpha_{0}z^{\nu(\alpha)}\prod_{0<i<<\infty}(1-\alpha_{-i}z^{-i})\prod_{0<j}(1-\alpha_{j}z^{j}),$$ where $\nu(\alpha)$ is the order, $\alpha_{0}$ is a unit of $A$ and all the $\alpha_{-i}$ are nilpotents. This implies an isomorphism, $$\mathbb{G}_{m}(\mathcal{D}^{*})\cong\mathbb{Z}\times\mathcal{D}^{\infty}\times\mathbb{G}_{m}\times\mathbb{A}^{\infty}.$$ In particular we see that $\pi_{0}(\mathbb{G}_{m}(\mathcal{D}^{*}_{\mathbb{C}}))=\mathbb{Z}$, corresponding to pole/ zero order. We will find it useful, with a view to generalising this result, to decompose $\mathbb{Z}$ as $(\infty\times\mathbb{Z}_{<0})\sqcup\{0\}\sqcup (0\times\mathbb{Z}_{>0})$. Recall that we have a covering action of the monoid $\mathbb{N}^{\times}$ on $\mathbb{G}_{m}(\mathcal{D}^{*}_{\mathbb{C}})$. This descends to an action on $\pi_{0}$. It is easily seen that it acts trivially on the summand, $\{0\}$, of $\pi_{0}$ and via multiplication on the summands $(\infty\times\mathbb{Z}_{<0})$ and $(0\times\mathbb{Z}_{<0})$. Note that in this case the answer agrees with the topological analogue. \\ \\ Taking the quotient by this action, and denoting it $\pi_{0}^{\mathbb{N}^{\times}}$, we see that this has size $3$; one component for each puncture and one for the holomorphic loops. It is in this form that the computation will generalise. \item Already in higher genera it is no longer the case that the complex points of the algebraic loop space $X(\mathcal{D}^{*})$ are homotopy equivalent to their topological analogue, cf. the mathoverflow comment of B. Bhatt, [1]. \end{itemize}\end{example} We introduce some notation before stating our main result; \begin{definition} If $X$ is an affine smooth curve with proper model $\overline{X}$, then we let $\partial X$ denote the complement of $X$ inside $\overline{X}.$ \end{definition} \begin{tcolorbox}\begin{theorem} Let $X$ be a smooth complex affine curve, then we compute the number of connected components, up to coverings, of its loop space as $$\#\pi_{0}^{\mathbb{N}^{\times}}X(\mathcal{D}_{\mathbb{C}}^{*})=1+\#\partial X,$$ if $X$ is not $\mathbb{A}^{1}$, and $1$ otherwise. \end{theorem}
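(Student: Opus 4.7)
The plan is to construct a surjective, locally constant invariant
\[
\iota: X(\mathcal{D}^*)_\mathbb{C} \longrightarrow \{\star\} \sqcup (\partial X \times \mathbb{Z}_{>0})
\]
with connected fibres, and then to compute the induced $\mathbb{N}^\times$-quotient of its target. For the invariant: given a $\mathbb{C}$-loop $\gamma:\mathcal{D}^*\to X$, compose with $X\hookrightarrow\overline{X}$ and apply the valuative criterion for the proper curve $\overline{X}$ to extend uniquely to an arc $\bar\gamma:\mathcal{D}\to\overline{X}$; set $\iota(\gamma)=\star$ when $\bar\gamma(0)\in X$, and $\iota(\gamma)=(p,n)$ when $\bar\gamma(0)=p\in\partial X$, with $n$ the $z$-adic order at $0$ of $t_p\circ\bar\gamma$ for any local uniformiser $t_p$ of $\overline{X}$ at $p$. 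The covering action $z\mapsto z^k$ fixes $\star$ and sends $(p,n)\mapsto(p,kn)$; since any two pairs $(p,n),(p,m)$ are joined through $(p,nm)$ in the induced orbit relation, the quotient is $\{\star\}\sqcup\partial X$, of cardinality $1+\#\partial X$.

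The main work is to prove that $\iota$ is locally constant when $X\neq\mathbb{A}^1$. The arc stratum $\iota^{-1}(\star)=X(\mathcal{D})_\mathbb{C}$ is then open by the statement in the example, and always closed, hence clopen. So given a loop $\gamma_{t_0}$ with $\iota(\gamma_{t_0})=(p,n)$, every sufficiently close $\gamma_t$ is still non-holomorphic. Pick a complex-analytic chart $U\ni p$ of $\overline{X}^{\mathrm{an}}$ on which $t_p$ is holomorphic and a radius $\epsilon>0$ small enough that $\bar\gamma_t(\{|z|\le\epsilon\})\subset U$ for $t$ close to $t_0$; this uses the classical continuity of $t\mapsto\bar\gamma_t$ on compact subsets of $\mathcal{D}$, itself a consequence of the ind-scheme structure. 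The zeros of $t_p\circ\bar\gamma_t$ on $\{|z|<\epsilon\}$ are concentrated at $z=0$, because $\gamma_t$ sends $\mathcal{D}^*$ into $X\not\ni p$. The argument principle identifies their total multiplicity with the integer-valued continuous function $(2\pi i)^{-1}\oint_{|z|=\epsilon}\mathrm{d}\log t_p(\bar\gamma_t(z))$ of $t$, which is therefore locally constant; this forces $\iota(\gamma_t)=(p,n)$. For $\mathbb{A}^1$ this argument collapses precisely because the arc stratum is not open, as witnessed by the family $z+tz^{-1}$.

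Connectedness of fibres: for $\iota^{-1}(\star)=X(\mathcal{D})_\mathbb{C}$ the scaling $\gamma(z)\mapsto\gamma(sz)$ for $s\in[0,1]$ retracts onto the connected space $X(\mathbb{C})$. For $\iota^{-1}(p,n)$ I would use an étale chart of $\overline{X}$ near $p$ to identify this stratum, at the level of $\mathbb{C}$-points, with a Contou-Carr\`ere-style product of $\mathbb{G}_m$, arc and affine factors, hence connected. The $\mathbb{A}^1$ case is immediate: $\mathbb{A}^1(\mathcal{D}^*)_\mathbb{C}=\varinjlim_n\mathbb{C}^{[-n,\infty)}$ is a directed colimit of contractible spaces along closed inclusions and so connected, giving $\#\pi_0^{\mathbb{N}^\times}=1$.

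The hardest step is the Rouché argument above: one must verify that $\bar\gamma$ varies continuously in $\gamma$ in a sense strong enough to place $\bar\gamma_t(\{|z|\le\epsilon\})$ into a fixed chart $U$ uniformly for $t$ near $t_0$. I expect this follows from unwinding the ind-scheme structure, but the resulting sharp dichotomy between $\mathbb{A}^1$ and all other smooth affine curves, refining the already-subtle ``arcs open in loops'' property, is where I anticipate the essential difficulty.
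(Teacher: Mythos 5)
Your invariant $\iota$ (central value of the extension $\overline{\gamma}$ together with the order at the puncture), the connectedness of its fibres via a Contou-Carr\`ere-style factorisation, and the computation of the $\mathbb{N}^{\times}$-quotient all match the paper. But the step you yourself identify as the main work --- local constancy of $\iota$ via Rouch\'e / the argument principle --- has a fatal gap: a $\mathbb{C}$-point of $X(\mathcal{D}^{*})$ is a map $spec(\mathbb{C}((z)))\rightarrow X$, i.e.\ a \emph{formal} Laurent series with arbitrary, typically non-convergent, coefficients. There is no analytic annulus, no image set $\overline{\gamma}_{t}(\{|z|\leq\epsilon\})$, and no contour integral $\oint_{|z|=\epsilon}d\log t_{p}(\overline{\gamma}_{t}(z))$; the argument principle simply does not apply to these objects. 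Moreover, the ``classical continuity of $t\mapsto\overline{\gamma}_{t}$'' you invoke is exactly what the paper cautions against: extension to the compactification is \emph{not} a morphism of spaces $X(\mathcal{D}^{*})\rightarrow\overline{X}(\mathcal{D})$ (witness the family $z+tz^{-1}$), and continuity of the central value in a family is essentially the statement being proved, so assuming it is circular. The same circularity affects your appeal to the example for openness of the arc stratum: the paper states that openness as a \emph{consequence} of the main theorem (``we will see below''), not as an available input.

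What the paper does instead, and what your proposal is missing, is to replace analytic continuity by deformation invariance of \emph{algebraic} invariants along \emph{algebraic} families: a connected component of the ind-scheme contains both loops inside a connected affine closed subscheme $S$, and the tautological family $\mathcal{D}^{*}_{S}\rightarrow X$ is then paired with invariants that cannot jump over a connected affine base. The role of your winding number is played by the pull-back in continuous de Rham cohomology of a \emph{global} meromorphic $1$-form with residue $1$ at $p$, produced by Riemann--Roch --- and this is exactly where the case distinction enters, since the residue theorem forbids such a form when $\#\partial X=1$. That case (one puncture, positive genus) therefore needs a genuinely non-abelian tool: an $S_{3}$-cover of $X$ that does not extend over $\overline{X}$, together with constancy in families of the induced map on $\pi_{1}^{et}$ (SGA 1). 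Your proposal has no mechanism covering this case, and the Rouch\'e argument cannot be formalised to fill it: the form $d\log t_{p}$ exists only near $p$, and pairing it with a loop already presupposes that the loop factors through the formal punctured neighbourhood of $p$, which is again the statement to be proven.
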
\end{tcolorbox}\begin{remark} We can re-state our main result more explicitly as follows: let $X$ be a smooth affine curve. If $X=\mathbb{A}^{1}$ then its loop space is connected. Otherwise the set of connected components of its loop space is naturally identified with $(\mathbb{N}^{\times}\times\partial X)\bigsqcup \{X(\mathcal{D})\}$.\end{remark} \section{THE PROOF} We will break the proof into a couple of cases, namely those curves $X$ with $\#\partial X>1$, and those with $\#\partial X=1,$  the second case proving to tbe the trickier one. Our strategy will be to find invariants of loops which do not change as we continuously deform the loop, and then to use these to differentiate between components. \begin{definition} If $\gamma:\mathcal{D}^{*}_{\mathbb{C}}\rightarrow X$ is a loop, we denote by $\overline{\gamma}$ the extension to a holomorphic loop, $\overline{\gamma}:\mathcal{D}_{\mathbb{C}}\rightarrow\overline{X}$.\end{definition}\begin{remark} The existence of such an extension is guaranteed by the valuative criterion. We caution the reader that this \emph{does not} correspond to a morphism of spaces, $X(\mathcal{D}^{*})\rightarrow\overline{X}(\mathcal{D})$. This can be seen for example by considering the $\mathbb{A}^{1}$-family of loops, $z+tz^{-1}$, defined above, indeed the extensions to the central element $0\in\mathcal{D}$ vary discontinuosly with $t$. \end{remark} The most natural invariant associated to a loop is the pull-back on degree $1$ de Rham cohomology. In fact it makes more sense to consider \emph{continuous de Rham cohomology} of topological $\mathbb{C}$-algebras. We record a simple lemma: \begin{lemma} Let $S$ be a connected affine scheme with $\mathbb{C}$-points $s_{0}$ and $s_{1}$. Let $$\gamma:\mathcal{D}^{*}_{S}\rightarrow X,$$ be an $S$-family of loops. Write $\gamma_{0}$ and $\gamma_{1}$ for the fibres at  $s_{0}$ and $s_{1}$. Then the pull-backs, $$s_{i}^{*}:H^{*}_{dR}(X)\rightarrow H^{*}_{dR}(\mathcal{D}^{*}),$$ agree for $i=0,1$, where $H^{*}_{dR}$ denotes continuous de Rham cohomology.\end{lemma}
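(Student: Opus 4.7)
The plan is to exploit the fact that continuous de Rham cohomology of $\mathcal{D}^*_S$ only feels the topology of $S$ through $H^*_{dR}(S)$, all of which is killed by the further restriction to a $\mathbb{C}$-point --- provided $S$ is connected.

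First, the family $\gamma$ induces a single pullback $\gamma^*: H^*_{dR}(X) \to H^*_{dR, cts}(\mathcal{D}^*_S)$, and the lemma's map $s_i^*$ factors as $\gamma^*$ followed by the restriction $r_i: H^*_{dR, cts}(\mathcal{D}^*_S) \to H^*_{dR, cts}(\mathcal{D}^*)$ along the fibre inclusion at $s_i$. It therefore suffices to show $r_0 = r_1$.

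Next I would establish a Künneth-type decomposition (writing $A = \mathcal{O}(S)$):
$$H^*_{dR, cts}(\mathcal{D}^*_S) \;\cong\; H^*_{dR}(S) \otimes_\mathbb{C} H^*_{dR, cts}(\mathcal{D}^*_\mathbb{C}).$$
The main input is the splitting $\Omega^{1, cts}_{A((z))/\mathbb{C}} \cong A((z)) \otimes_A \Omega^1_A \oplus A((z)) \cdot dz$ of continuous Kähler differentials (with $A((z))$ given its $z$-adic topology, coming from the $A[[z]]$-lattice), together with flatness of $A((z))$ over $A$, which together cause the de Rham double complex to decompose as a tensor product.

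Under this decomposition, $r_i$ is the tensor product of the point-restriction $H^*_{dR}(S) \to H^*_{dR}(\mathrm{pt})$ at $s_i$ with the identity on $H^*_{dR, cts}(\mathcal{D}^*_\mathbb{C})$. Connectedness of $S$ gives $H^0_{dR}(S) = \mathbb{C}$, on which the point-restriction is the identity map independently of $i$; meanwhile $H^{>0}_{dR}(\mathrm{pt}) = 0$ kills all higher-degree classes coming from $S$. Hence $r_0 = r_1$, and composing with $\gamma^*$ proves the lemma.

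The hard part will be a careful verification of the Künneth isomorphism in the topological setting --- one has to pin down the correct topology on $A((z))$ and the correct notion of continuous Kähler differentials, and check that flatness of $A((z))$ over $A$ suffices to run the usual Künneth argument on the de Rham double complex. Beyond this one-time check the argument is purely formal.
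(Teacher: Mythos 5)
Your proposal is correct and is essentially the argument the paper intends: the paper's own proof is a one-line deferral to the discrete de Rham statement, and your factorisation through $\gamma^{*}$ followed by a K\"unneth decomposition of $H^{*}_{dR,cts}(\mathcal{D}^{*}_{S})$ and evaluation at the two points is precisely the standard proof of that discrete statement, transported to the continuous setting. One caveat worth recording: the flatness of $A((z))$ over $A$ that you invoke can actually fail when $A$ is not coherent (and the subschemes $S\subset X(\mathcal{D}^{*})$ arising in the paper's application are of infinite type, so this is the relevant generality), but your argument never truly needs it --- the splitting of continuous differentials already decomposes the whole continuous de Rham complex into the constant-in-$z$ part $\Omega^{*}_{A}$, the part $\Omega^{*}_{A}\wedge d\log z$, and a summand contracted by the explicit (continuous) homotopy given by integration in $z$, which yields the K\"unneth isomorphism with no flatness or finiteness hypotheses; for the same reason, in the splitting itself the first summand should be the completed tensor product $\Omega^{1}_{A}((z))$ rather than $A((z))\otimes_{A}\Omega^{1}_{A}$ when $\Omega^{1}_{A}$ is not finitely presented.
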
 \begin{proof} This is a simple generalisation of the corresponding statement for discrete de Rham cohomology. \end{proof} \begin{remark}We remind the reader that the first continuous de Rham cohomology of $\mathcal{D}^{*}_{\mathbb{C}}$ is one dimensional, generated by $d\log(z)$. Suppose we have a loop, $\gamma$, into $X$, with $\overline{\gamma}(0)=p\in\overline{X}$. Suppose further that we have a meromorphic $1$-form $\omega$ on $X$, then the pull-back in first continuous de Rham cohomology corresponds to a multiple (given by the order at $0$) of the residue.\end{remark} We show now that if two non-holomorphic loops admit extensions with different central value then they must lie in different components. In order to do this we first record a simple lemma: \begin{lemma} Let $p\neq q$ be two points of a smooth proper curve $X$. Then there exists a meromorphic $1$-form, $\omega$, on $X$, which is holomorphic away from $\{p,q\}$ and which satisfies $res_{p}(\omega)=1$, $res_{q}(\omega)=-1$.\end{lemma}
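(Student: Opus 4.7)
The plan is to construct $\omega$ as a differential of the third kind via a short exact sequence of sheaves on the smooth proper curve $X$, together with the residue theorem. This is essentially a Riemann--Roch style dimension count combined with a surjectivity argument.

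Concretely, I would consider the short exact sequence of sheaves
\begin{equation*}
0 \longrightarrow \Omega^{1}_{X} \longrightarrow \Omega^{1}_{X}(p+q) \longrightarrow \mathbb{C}_{p}\oplus\mathbb{C}_{q} \longrightarrow 0,
\end{equation*}
where the right-hand map sends a local section to its pair of residues at $p$ and $q$. Taking global sections yields a long exact sequence whose relevant portion reads
\begin{equation*}
0\to H^{0}(\Omega^{1}_{X})\to H^{0}(\Omega^{1}_{X}(p+q))\xrightarrow{\;\mathrm{res}\;}\mathbb{C}^{2}\xrightarrow{\;\delta\;} H^{1}(\Omega^{1}_{X}).
\end{equation*}
The strategy is to prove that $\mathrm{res}$ hits the vector $(1,-1)\in\mathbb{C}^{2}$, which will exhibit the desired $\omega$.

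The key input is the identification of the connecting homomorphism $\delta$ with the sum-of-residues map $(a,b)\mapsto a+b$, under the canonical trace isomorphism $H^{1}(\Omega^{1}_{X})\cong\mathbb{C}$ provided by Serre duality on the smooth proper curve $X$. Granting this, the image of $\mathrm{res}$ is precisely $\ker\delta=\{(a,-a):a\in\mathbb{C}\}$, which contains $(1,-1)$; pulling this back gives an $\omega$ with the required pole and residue behaviour.

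The main obstacle is therefore justifying the identification of $\delta$ with the sum of residues; this is the content of the residue theorem, which states that the sum of residues of any global meromorphic $1$-form on a smooth proper curve vanishes. Equivalently, the failure of a pair $(a,b)$ to be the residue datum of a global form is measured exactly by $a+b$. This is a classical fact; I would either cite it directly or, if self-containedness is desired, sketch it via \v{C}ech cohomology: a section $(a,b)$ of the skyscraper sheaf lifts Zariski-locally to forms $a\,\tfrac{dz_{p}}{z_{p}}$ and $b\,\tfrac{dz_{q}}{z_{q}}$ near $p$ and $q$, and the \v{C}ech cocycle obtained by extending these by zero represents $\delta(a,b)$, whose trace is $a+b$ by the local description of Serre duality.
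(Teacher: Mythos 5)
Your proposal is correct, but it takes a genuinely different route from the paper's. The paper argues by a bare Riemann--Roch dimension count: $\deg\Omega_{X}(p+q)=2g$ gives $h^{0}(\Omega_{X}(p+q))=g+1>g=h^{0}(\Omega_{X})$, so some global section has a (necessarily simple) pole at $p$; rescaling gives $res_{p}(\omega)=1$, and then the Residue Theorem alone forces $res_{q}(\omega)=-1$. You instead run the long exact sequence of the skyscraper sequence $0\to\Omega^{1}_{X}\to\Omega^{1}_{X}(p+q)\to\mathbb{C}_{p}\oplus\mathbb{C}_{q}\to 0$ and identify the connecting homomorphism with the sum-of-residues functional via the trace isomorphism. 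Both arguments ultimately rest on the same pillars (Serre duality and the Residue Theorem --- Riemann--Roch is itself proved via Serre duality), but yours proves strictly more: it shows the image of the residue map is exactly the line $\{(a,-a)\}$, i.e.\ it characterizes all achievable residue data, and it generalizes verbatim to differentials of the third kind with any finite set of prescribed simple poles whose residues sum to zero. The price is that you must actually compute the connecting map, which is the one delicate point, whereas the paper's argument needs nothing about $\delta$ at all.

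One caveat on that delicate point: your assertion that the identification of $\delta$ with $(a,b)\mapsto a+b$ ``is the content of the residue theorem'' overstates what the Residue Theorem gives. The theorem yields only the containment $\mathrm{image}(\mathrm{res})\subseteq\ker(\mathrm{sum})$, equivalently $\ker\delta\subseteq\ker(\mathrm{sum})$; by itself it does not rule out $\ker\delta=0$, i.e.\ that no nonconstant residue datum is achievable. You can close this gap either by the \v{C}ech/trace computation you sketch at the end (which is the honest proof of the identification), or more cheaply by noting that $\delta:\mathbb{C}^{2}\to H^{1}(\Omega^{1}_{X})\cong\mathbb{C}$ cannot be injective for dimension reasons, so $\ker\delta$ is at least one-dimensional and hence equals the line $\{(a,-a)\}$. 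With either patch your argument is complete.
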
\begin{proof} This is a simple computation with the Riemann-Roch Theorem. Let $\Omega_{X}$ dentoe the canonical bundle. We apply Riemann-Roch to the bundle $\Omega_{X}(p+q)$. We note $$deg(\Omega_{X}(p+q))=2g(X),$$ and thus we compute $h^{0}(\Omega_{X}(p+q))=1+g > h^{0}(\Omega_{X})$. Taking a non-zero element, $$\omega\in H^{0}(X,\Omega_{X}(p+q))/H^{0}(X,\Omega_{X}),$$ we may assume without loss of generality that it is not homolorphic at $p$. It has a pole of order at most one at $p$ and thus a pole of order exactly one. Rescaling we may assume $res_{p}(\omega)=1$, the only other point with potentially non-zero residue is $q$ whence we deduce  $res_{q}(\omega)=-1$ by the Residue Theorem, completing the proof. \end{proof} We deduce the following: \begin{corollary} Let $X$ be an affine smooth curve with $\#\partial X>1$. Let $\gamma_{0}$ and $\gamma_{1}$ be non-holomorphic loops into $X$ and let $\overline{\gamma_{0}}$ and $\overline{\gamma_{1}}$ be their extensions to holomorphic loops into $\overline{X}$. Assume that the values at $0\in\mathcal{D}$ of these extensions differ. Then $[\gamma_{0}]\neq [\gamma_{1}]$ in $\pi_{0}X(\mathcal{D}^{*}_{\mathbb{C}})$. Further, if $\gamma_{2}$ is a homolorphic loop into $X$ then its class in $\pi_{0}$ differs from either of the classes of the $\gamma_{0/1}$.\end{corollary}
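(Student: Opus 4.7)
The plan is to combine Lemma 3.2 (pullback on first continuous $H^{*}_{dR}$ is constant in connected families) with the $1$-form produced by Lemma 3.3. First I observe that non-holomorphy forces $p_{0}:=\overline{\gamma_{0}}(0)$ and $p_{1}:=\overline{\gamma_{1}}(0)$ to land in $\partial X$. Indeed, if $\overline{\gamma}(0)$ lay in the open subset $X\subset\overline{X}$ then, because $\mathcal{D}=spec(\mathbb{C}[[z]])$ has only two points and every open neighbourhood of the closed point is all of $\mathcal{D}$, the entire extension $\overline{\gamma}$ would factor through $X$, forcing $\gamma$ itself to be holomorphic. By hypothesis $p_{0}$ and $p_{1}$ are therefore distinct points of $\partial X$.

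Next I apply Lemma 3.3 to produce a meromorphic $1$-form $\omega$ on $\overline{X}$ whose polar divisor is contained in $\{p_{0},p_{1}\}$, with $res_{p_{0}}(\omega)=1$ and $res_{p_{1}}(\omega)=-1$. Because both poles lie on $\partial X$, the restriction $\omega|_{X}$ is holomorphic, and in particular a legitimate meromorphic $1$-form on $X$. By the remark following Lemma 3.2, the class of $\gamma^{*}\omega$ in $H^{1}_{dR}(\mathcal{D}^{*})\cong\mathbb{C}\cdot[d\log(z)]$ equals $m\cdot res_{\overline{\gamma}(0)}(\omega)\cdot[d\log(z)]$, where $m$ denotes the order of $\overline{\gamma}$ at $0$. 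Since the $\gamma_{i}$ are non-holomorphic they are non-constant, so $m_{0},m_{1}$ are strictly positive integers. Hence $[\gamma_{0}^{*}\omega]=m_{0}[d\log(z)]$ and $[\gamma_{1}^{*}\omega]=-m_{1}[d\log(z)]$ are distinct and nonzero. For any holomorphic $\gamma_{2}$ the central value $\overline{\gamma_{2}}(0)=\gamma_{2}(0)$ lies in $X$, where $\omega$ is regular, so the residue there vanishes and $[\gamma_{2}^{*}\omega]=0$. All three pullback classes are therefore pairwise distinct.

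To conclude I invoke Lemma 3.2. If two of the $\gamma_{i}$ lay in the same connected component of the ind-space $X(\mathcal{D}^{*}_{\mathbb{C}})$, then, since the ind-structure presents this space as a filtered colimit of affine schemes along closed immersions, both loops would live on a common affine layer and could be connected inside it by an irreducible closed subvariety; taking $S$ to be that subvariety and applying Lemma 3.2 forces the corresponding pullback classes in $H^{1}_{dR}$ to agree, contradicting the previous paragraph.

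The main obstacle is really just verifying that the residue formula in the preceding remark genuinely produces the sign and multiplicity claimed above. Once this is granted, the proof is a mechanical combination of Lemmas 3.2 and 3.3, with the explicit construction of the $1$-form $\omega$ (already isolated into Lemma 3.3) carrying essentially all the geometric content.
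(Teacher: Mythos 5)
Your proposal follows the paper's own argument in all essentials: Lemma 3.3 supplies the $1$-form $\omega$ whose residues separate the three central values, the remark after Lemma 3.2 converts residues into continuous de Rham pullback classes, and Lemma 3.2 combined with the ind-affine structure rules out two of the loops lying in a common connected component. In two places you are actually more careful than the paper: you justify that non-holomorphic loops have central values in $\partial X$ (so that $\omega|_{X}$ is genuinely a form on $X$, and the holomorphic loop $\gamma_{2}$ picks up residue zero), and you track the multiplicities, observing that $m_{0}\cdot 1$, $m_{1}\cdot(-1)$ and $0$ are pairwise distinct because $m_{0},m_{1}$ are positive integers --- a point the paper elides by speaking only of the three residues $1,-1,0$.

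One step, however, is wrong as stated: you claim the two loops, once placed on a common affine layer, ``could be connected inside it by an irreducible closed subvariety.'' Two points in the same connected component of an affine scheme need not lie on any common irreducible closed subvariety (consider two crossing lines in the plane and one point on each line away from the crossing), and the layers here are infinite-dimensional, so no finite-type chain-of-curves argument is available either. Fortunately irreducibility is never needed: Lemma 3.2 requires only that $S$ be a \emph{connected} affine scheme, so the fix is to take $S$ to be the connected component of the layer containing both loops, with its reduced closed subscheme structure (closed in affine, hence affine) --- which is exactly what the paper does. (You and the paper share one further unproved assertion, namely that loops lying in one connected component of the ind-topology on $\mathbb{C}$-points lie in one connected component of a sufficiently large layer; since the paper states this without proof as well, it is not a defect specific to your argument.) With ``irreducible closed subvariety'' replaced by ``connected component,'' your proof coincides with the paper's.
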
 \begin{proof}. The lemma above furnishes a meromorphic $1$-form $\omega$, so that the three residues $res_{\overline{\gamma}_{i}(0)}(\omega)$ all differ. Recalling that these residues correspond to de Rham pull-backs and then combining the lemmas above it suffices to show that if two loops,  $\gamma_{0}$ and $\gamma_{1}$, are in the same connected component of $X(\mathcal{D}^{*}_{\mathbb{C}})$  then there is a connected affine $S$ and an $S$-family of loops interpolating them.\\ \\ Let the connected component containing them be called $S´$. It is ind-affine and thus we find an affine closed sub-scheme containing the loops  $\gamma_{0}$ and $\gamma_{1}$. They must lie in the same component of a suitably large such sub-scheme, which we call $S$. The inclusion $S\hookrightarrow X(\mathcal{D}^{*})$ provides the tautological $S$-family, $\gamma: \mathcal{D}^{*}_{S}\rightarrow X$ and we conclude. \end{proof}\begin{remark} The reader will note that this argument breaks down without the assumption that  $\#\partial X>1$. Indeed if  $\#\partial X=1$ then any holomorphic form, $\omega$, on $X$ will have trivial residue at the puncture of $\overline{X}$.\end{remark} The next step is to identify when two loops, $\gamma_{0}$ and $\gamma_{1}$, both extending to arcs with central value $p\in\overline{X}$, lie in the same connected component. There is an obvious guess, and it is the correct one. We first note that in a purely local situation, the computation of the connected components of a loop space is easy: \begin{lemma} Let $\mathfrak{X}=\mathcal{D}^{*}(\mathcal{D}^{*})$. Then connected components of $\mathfrak{X}(\mathbb{C})$ are labelled by $\mathbb{Z}_{>0}$, corresponding to the degree of the morphism $\mathcal{D}^{*}\rightarrow\mathcal{D}^{*}$. \end{lemma}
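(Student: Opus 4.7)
The plan is to identify the $\mathbb{C}$-points of $\mathfrak{X}$ concretely, separate distinct degrees using the continuous de Rham invariant of the lemma above, and exhibit explicit families witnessing that each fixed-degree stratum is connected.

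A $\mathbb{C}$-point of $\mathfrak{X}$ is a $\mathbb{C}$-algebra homomorphism $\mathbb{C}((w))\to\mathbb{C}((z))$, determined by the image $f(z)$ of $w$. In order that $w^{-1}$ have an image and that formal power series in $w$ converge in the $z$-adic topology, we need $f$ nonzero with $n:=v_{z}(f)>0$; this integer is by construction the degree of the associated morphism $\mathcal{D}^{*}\to\mathcal{D}^{*}$. To see that $n$ is a locally constant invariant, suppose $\gamma_{0},\gamma_{1}\in \mathfrak{X}(\mathbb{C})$ lie in the same connected component. As $\mathfrak{X}$ is ind-affine, exactly as in the corollary above, they lie in a common connected closed affine sub-scheme $S$, yielding a tautological $S$-family to which the preceding lemma applies. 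The generator of $H^{1}_{dR,cont}(\mathcal{D}^{*})$ is $[d\log(z)]$, and the pullback of $d\log(w)$ along $w\mapsto f(z)=z^{n}u(z)$ with $u\in\mathbb{C}[[z]]^{*}$ is $d\log(f)=n\,d\log(z)+d\log(u)$, the second summand being continuously exact. Hence the pullback class equals $n\cdot[d\log(z)]$ and distinct degrees give distinct components.

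Conversely, for fixed $n\geq 1$ the degree-$n$ stratum is parametrised as a set by $\mathbb{C}^{\times}\times\mathbb{C}^{\infty}$ via the coefficients of $u$. Given any two degree-$n$ loops, one connects the leading units in $\mathbb{G}_{m}(\mathbb{C})$ by a path and linearly interpolates the remaining coefficients; this is realised by a family parametrised by $\mathbb{G}_{m}\times\mathbb{A}^{m}$ for $m$ large, all of whose fibres have degree exactly $n$. The main obstacle I expect is the bookkeeping needed to verify that the degree-$n$ locus carries the correct ind-topology inherited from $\mathfrak{X}$ so that topological connectedness implies connectedness in $\mathfrak{X}(\mathbb{C})$; this is handled by observing that ``$v_{z}(f)\geq n$'' cuts out a closed sub-ind-scheme (vanishing of finitely many coefficients), making the degree-$n$ locus locally closed and locally of the expected product shape, as in the pole-order filtration description of $\mathbb{A}^{1}(\mathcal{D}^{*})$ from the earlier examples.
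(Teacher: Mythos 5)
Your proposal is correct and takes essentially the same route as the paper: the paper also separates degrees by noting $\gamma^{*}(d\log(z))=n\,d\log(z)$ in continuous de Rham cohomology (combined with the family lemma via the ind-affine/tautological-family argument), and also proves connectedness of each degree stratum by writing $\gamma(z)=z^{n}\gamma_{+}(z)$ and identifying the stratum with $\mathbb{C}^{\times}\times\mathbb{C}^{\infty}$. Your additional details (the exactness of $d\log(u)$, the explicit connecting families, and the local-closedness of the degree-$n$ locus) merely flesh out steps the paper leaves implicit.
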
\begin{proof} First note that if $\gamma:\mathcal{D}^{*}\rightarrow\mathcal{D}^{*}$ is a loop of order $n$ then $\gamma^{*}(d\log(z))=nd\log(z)$ so that loops of different degree must lie in different connected components according to lemm the lemma above. Writing a loop of order $n$ in the form $\gamma(z)=z^{n}\gamma_{+}(z)$ gives a homeomorphism with $\mathbb{C}^{\times}\times\mathbb{C}^{\infty}$ and thus we conclude the lemma. With more effort we could describe the space $\mathfrak{X}$ in the manner of Contou-Carr\`{e}re (for $\mathbb{G}_{m}$) but this is not needed for this note. \end{proof}\begin{corollary} It follows that after we quotient by the covering action we have just one component, i.e. we have  $\pi_{0}^{\mathbb{N}^{\times}}\mathfrak{X}=*.$\end{corollary}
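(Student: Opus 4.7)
The plan is to feed the preceding lemma directly into the definition of the covering action. The lemma identifies $\pi_0\mathfrak{X}(\mathbb{C})$ with $\mathbb{Z}_{>0}$ via the degree (equivalently, via pullback on $d\log z$), so the corollary will follow once we check that the $\mathbb{N}^\times$-action on this set of components is by multiplication. Then $\mathbb{Z}_{>0}/\mathbb{N}^\times$ is a single orbit, since $n\cdot 1=n$ reaches every element.

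First I would unwind the covering action in the local model. For $\gamma\in\mathfrak{X}(\mathbb{C})=\mathrm{Hom}(\mathcal{D}^*,\mathcal{D}^*)$ written in the normal form $\gamma(z)=z^{d}\gamma_+(z)$ from the lemma, and for $n\in\mathbb{N}^\times$ acting by precomposition with $z\mapsto z^n$, the result is $(n\cdot\gamma)(z)=z^{nd}\gamma_+(z^n)$, which still has the normal form and evidently has degree $nd$. Equivalently, using the de Rham interpretation from the remark above, $(n\cdot\gamma)^*(d\log z)=\gamma^*\bigl((z^n)^*d\log z\bigr)=n\gamma^*(d\log z)$, so the degree scales by $n$ as claimed.

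Second, since the preceding lemma already established that degree is a complete connected-component invariant for $\mathfrak{X}(\mathbb{C})$, the induced $\mathbb{N}^\times$-action on $\pi_0\mathfrak{X}(\mathbb{C})\cong\mathbb{Z}_{>0}$ is precisely multiplication. The quotient $\mathbb{Z}_{>0}/\mathbb{N}^\times$ has a single element, since any $d\in\mathbb{Z}_{>0}$ lies in the orbit of $1$. This yields $\pi_0^{\mathbb{N}^\times}\mathfrak{X}=*$.

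There is no real obstacle; the only thing to be careful about is confirming that the covering action on components really is by multiplication rather than, say, trivial on some subset, but this is immediate from the degree computation. The content of the corollary is entirely combinatorial, packaged cleanly by the previous lemma.
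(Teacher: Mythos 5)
Your proposal is correct and matches the paper's intent exactly: the paper offers no separate proof for this corollary, treating it as an immediate consequence of the preceding lemma ($\pi_0\mathfrak{X}(\mathbb{C})\cong\mathbb{Z}_{>0}$ via degree) together with the observation that precomposition with $z\mapsto z^n$ multiplies degree by $n$, so the $\mathbb{N}^\times$-orbit of $1$ exhausts $\mathbb{Z}_{>0}$. Your explicit verification that the covering action respects the normal form $z^d\gamma_+(z)$ and scales $d\log z$ by $n$ is exactly the bookkeeping the paper leaves implicit.
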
\begin{corollary}If two loops, $\gamma_{0}$ and $\gamma_{1}$, both extend to arcs having the same central value, $p\in\overline{X}$, then they lie in the same component of $X(\mathcal{D}^{*}_{\mathbb{C}})$ iff they have the same order at $p$.\end{corollary}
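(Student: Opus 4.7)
The plan is to prove the two implications of the ``iff'' separately, using distinct mechanisms.

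\textbf{Same order implies same component.} The idea is to realise every such loop as the image of a point of the already-understood ind-scheme $\mathfrak{X}$. Fix a local uniformiser $u$ at $p$, thought of as an element of the function field of $\overline{X}$. Laurent expansion at $p$ defines a $\mathbb{C}$-algebra map $e_p: \mathcal{O}(X)\to\mathbb{C}((u))$, and precomposition with $e_p$ is natural in test-schemes, yielding a morphism of ind-schemes
\[ \phi_p: \mathfrak{X}\longrightarrow X(\mathcal{D}^*), \]
in particular continuous on $\mathbb{C}$-points. I will show that every loop $\gamma$ with extension centred at $p$ and order $n$ lies in $\phi_p(\mathfrak{X}_n)$, where $\mathfrak{X}_n$ denotes the connected component of $\mathfrak{X}$ labelled by $n$ in the earlier lemma. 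Indeed, $\overline{\gamma}$ induces on local rings a map $\mathbb{C}[[u]]\to\mathbb{C}[[z]]$ with $u\mapsto z^n\cdot(\text{unit})$; inverting $u$ yields $\eta_\gamma\in\mathfrak{X}(\mathbb{C})$ with $\phi_p(\eta_\gamma)=\gamma$ and order exactly $n$. Since $\mathfrak{X}_n$ is connected and $\phi_p$ is continuous, $\phi_p(\mathfrak{X}_n)$ is a connected subset of $X(\mathcal{D}^*_\mathbb{C})$ containing every loop with the given $(p,n)$, and the claim follows.

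\textbf{Same component implies same order (when $\#\partial X>1$).} Pick $q\in\partial X\setminus\{p\}$ and apply the Riemann-Roch argument of the earlier lemma to produce a meromorphic $1$-form $\omega$ on $\overline{X}$, holomorphic on $X$, with $\mathrm{res}_p(\omega)=1$ and $\mathrm{res}_q(\omega)=-1$. Near $p$ one has $\omega=du/u+(\text{holomorphic})$, so a loop $\gamma$ centred at $p$ of order $n$ satisfies $\gamma^*\omega=n\,dz/z+(\text{holomorphic at }0)$, giving residue $n$ at $z=0$. By the de Rham invariance lemma this residue is constant on connected components, so two loops in the same component, both centred at $p$, must share the same order.

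\textbf{Main obstacle.} The $(\Rightarrow)$ half breaks down when $\#\partial X=1$: by the Residue Theorem, every meromorphic $1$-form on $\overline{X}$ holomorphic on $X$ has residue $0$ at the unique puncture, so the de Rham residue is blind to the order. This is the genuinely harder case the author flags, and its resolution, likely using a more refined invariant than naive residues, should dominate the remainder of the argument.
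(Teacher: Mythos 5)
Your proposal is correct where it applies and is essentially the paper's own argument: the paper's (very terse) proof consists exactly of your first half --- every such loop factors uniquely through the formal punctured neighbourhood $\mathcal{D}^{*}_{p}\hookrightarrow X$, and one then invokes the lemma computing the components of $\mathfrak{X}=\mathcal{D}^{*}(\mathcal{D}^{*})$ --- while your residue computation is the intended way to get the converse direction, namely combining the de Rham invariance lemma with the Riemann--Roch lemma and the interpolating-family argument already used for the corollary on distinct central values. The obstacle you flag for $\#\partial X=1$ is not a defect relative to the paper: as the remark immediately following the corollary makes clear, it is only applied to curves with at least two punctures, and the one-puncture case (where the statement even fails for $\mathbb{A}^{1}$, whose loop space is connected) is exactly what the paper later re-proves, for $g(\overline{X})>0$, via the induced maps on $\pi_{1}^{et}$.
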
\begin{proof} Write $\mathcal{D}^{*}_{p}\hookrightarrow X$ for the inclusion of the formal punctured neighbourhood of $p$. It suffices to note that our loops $\gamma_{i}$ admit unique factorisations through $\mathcal{D}^{*}_{p}$ and then to use the lemma above. \end{proof}\begin{remark} This suffices to prove the main theorem in the case of curves with at least two punctures once we note that the arc space is also connected. In summary we see that in this case we have one component corresponding to the arcs, and one for each pair consisting of a positive integer and a puncture of the curve $X$. After quotienting by the $\mathbb{N}^{\times}$-action, we arrive at the desired claim. \end{remark} \begin{remark}
Our methods above, dealing with the case of curves $X$ with $\#\partial X>1$, were all essentially abelian. Abelian invariants, say $H^{*}_{et}(-,\mathbb{Z}/l)$, will not distinguish $X$ from $\overline{X}$ in the case of $\#\partial X=1$. Indeed the set of etale $\mathbb{Z}/l$-torsors for $X$ and $\overline{X}$ agree as they have the same \emph{abelianised} etale fundamental groups. This suggests that we attempt to distinguish non-holomorphic loops from holomorphic ones via the induced map on $\pi_{1}^{et}$. Note that we know from above that there are \emph{at most} two components (up to covers), our goal below is to show that there are exactly two as soon as $g(\overline{X})>0$. We remind the reader that the Newton-Puiseaux theorem implies that we have $\pi_{1}^{et}(\mathcal{D}^{*})=\widehat{\mathbb{Z}}$. Further the Riemann Existence theorem implies that we have $\pi_{1}^{et}(X)=\widehat{\pi_{1}}(X(\mathbb{C}))$, the profinite completion of the topological fundamental group.\end{remark} We begin with the following simple lemma: \begin{lemma} Let $X$ be a smooth curve with $\#\partial X>1$, and $g(\overline{X})>0$, then there exists an etale cover, $Y\rightarrow X,$ which is not pulled back from an etale cover of $\overline{X}$, i.e. does not extend to an etale cover of $\overline{X}$.\end{lemma}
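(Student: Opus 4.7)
My plan is to construct the desired cover topologically and then invoke Riemann Existence to realise it algebraically. Since $X(\mathbb{C})$ is a smooth Riemann surface of genus $g$ with $n := \#\partial X$ punctures, its topological fundamental group $\pi_{1}^{top}(X(\mathbb{C}))$ is free of rank $2g + n - 1$; a convenient presentation uses generators $a_{1}, b_{1}, \ldots, a_{g}, b_{g}$ inherited from $\overline{X}(\mathbb{C})$ together with small loops $c_{1}, \ldots, c_{n}$ around the punctures, subject to the single relation $c_{1}\cdots c_{n} = \prod_{i=1}^{g}[a_{i}, b_{i}]^{-1}$ which we use to eliminate one of the $c_{i}$. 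By the Riemann Existence theorem (already cited in the preceding remark) finite etale covers of $X$ correspond to finite quotients of $\pi_{1}^{top}(X(\mathbb{C}))$, and such a cover extends to a finite etale cover of $\overline{X}$ if and only if the image of each puncture class $c_{i}$ is trivial in the quotient, that is to say if and only if the monodromy around each puncture is trivial.

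To finish I would fix any puncture $p \in \partial X$, call the associated loop $c_{p}$, and exhibit a surjection to a finite group sending $c_{p}$ to a non-identity element; the associated etale cover of $X$ then has non-trivial monodromy at $p$ and so fails to extend across $p$. Passing to the abelianisation $\pi_{1}^{ab}(X(\mathbb{C})) \cong \mathbb{Z}^{2g + n - 1}$ it suffices to check that the image of $c_{p}$ is non-zero there: if $c_{p}$ is one of the retained free generators $c_{1}, \ldots, c_{n-1}$ this is immediate, and if instead $c_{p} = c_{n}$ its image in the abelianisation equals $-(c_{1} + \cdots + c_{n-1})$, which is still non-zero as soon as $n > 1$. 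Composing with any projection $\mathbb{Z}^{2g + n - 1} \rightarrow \mathbb{Z}/2$ non-trivial on the class of $c_{p}$ produces a connected etale double cover of $X$ with non-trivial monodromy at $p$, which is the cover sought.

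I do not anticipate any serious obstacle. Under the stated hypothesis $\#\partial X > 1$ the lemma reduces to the one-line observation above about $\pi_{1}^{ab}$ of a punctured surface, and in fact the hypothesis $g(\overline{X}) > 0$ is not even used in this reduction; it is presumably included because subsequent arguments in the one-puncture case will require positive genus to find enough covers to separate the non-holomorphic and holomorphic components.
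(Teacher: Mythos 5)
Your argument is correct for the hypotheses as literally printed, and it is genuinely different from the paper's. You work abelianly: you produce a $\mathbb{Z}/2$-cover whose monodromy at a chosen puncture is non-trivial, which works whenever $\#\partial X>1$ and needs no genus assumption. The paper instead maps $\pi_{1}(X(\mathbb{C}))$ onto the non-abelian group $S_{3}$, sending $\alpha_{1}\mapsto(12)$, $\beta_{1}\mapsto(123)$ and all other generators to the identity; since this sends $\prod_{i}[\alpha_{i},\beta_{i}]$ to a non-trivial permutation, the homomorphism cannot factor through $\pi_{1}(\overline{X}(\mathbb{C}))$, so by Riemann Existence the corresponding cover does not extend. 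That argument needs $g(\overline{X})>0$ but works for any number of punctures --- in particular for exactly one.

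That last point is where your proposal, though valid for the statement as printed, cannot substitute for the paper's proof. The hypothesis ``$\#\partial X>1$'' in the printed lemma is evidently a typo for the one-puncture case (``$\#\partial X=1$'', or at least ``$\geq 1$''): the remark immediately preceding the lemma sets up $\#\partial X=1$ as the remaining problem, the paper's own proof asserts that $\pi_{1}(X(\mathbb{C}))$ is \emph{freely generated} by $\alpha_{1},\beta_{1},\ldots,\alpha_{g},\beta_{g}$ (true only when there is exactly one puncture), and the corollary that invokes the lemma speaks of ``the puncture point'' $\infty\in\overline{X}$. In that intended case your method fails structurally, not just technically: with one puncture the puncture class equals $\prod_{i}[a_{i},b_{i}]^{-1}$, a product of commutators, so it dies in \emph{every} abelian quotient; equivalently $X$ and $\overline{X}$ have the same abelianised fundamental group, which is exactly the obstruction the paper's preceding remark flags (the $\mathbb{Z}/l$-torsors on $X$ and $\overline{X}$ agree). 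Any cover detected by $H_{1}$, in particular your double cover, extends across a single puncture. This is precisely why the paper reaches for a non-abelian target: one needs a finite group in which a product of commutators of the chosen images survives, and $S_{3}$ is the smallest such choice. Your closing observation --- that $g(\overline{X})>0$ is never used in your reduction --- was the warning sign: in the case the lemma exists to handle, positive genus is essential (for $g=0$ and one puncture, $X=\mathbb{A}^{1}$ is simply connected and the lemma is false), and no abelian argument can close the gap.
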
\begin{proof} Let us take standard generators for $\pi_{1}(\overline{X}(\mathbb{C}))$ coming from viewing a genus $g$ real surface as a $4g$-gon with appropriate edge identifications. We will call the generators $\{\alpha_{1},\beta_{1},...,\alpha_{g},\beta_{g}\}$.  $\pi_{1}(\overline{X}(\mathbb{C}))$ is generated by these elements subject to the one relation $\prod_{i}[\alpha_{i},\beta_{i}]=1$. $\pi_{1}(X(\mathbb{C}))$ is freely generated by them. \\ \\ It suffices, by Riemann Existence, to find a finite group $G$ such that the natural map, $$Hom(\pi_{1}(\overline{X}(\mathbb{C})),G)\rightarrow Hom(\pi_{1}(X(\mathbb{C}),G),$$ is not surjective. We take $G=S_{3}$. We define an element of the right hand side by sending $\alpha_{1}\mapsto (12), \beta_{1}\mapsto (123)$ and the rest of the generators to the unit. This maps the element  $\prod_{i}[\alpha_{i},\beta_{i}]=1$ to a non-trivial permutation and thus we are done.\end{proof}\begin{corollary} Let $X$ be as above, and fix any non-holomorphic loop, $\gamma\in X(\mathcal{D}^{*}_{\mathbb{C}})$. Then the induced map of etale fundamental groups, $\widehat{\mathbb{Z}}=\pi_{1}^{et}(\mathcal{D}^{*})\rightarrow\pi_{1}^{et}(X)$, is non-zero. \end{corollary}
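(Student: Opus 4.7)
The plan is to compute $\gamma_*(1)$ for a topological generator $1\in\widehat{\mathbb{Z}}=\pi_1^{et}(\mathcal{D}^*)$ as an explicit element of $\pi_1^{et}(X)$, and then to verify that this element is non-trivial.

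Since $\gamma$ is non-holomorphic, $\overline{\gamma}$ sends $0$ to the unique puncture $p$, so $\gamma$ factors through the formal punctured neighborhood of $p$ as $\mathcal{D}^*\xrightarrow{\phi}\mathcal{D}_p^*\hookrightarrow X$. Picking a uniformizer $t$ at $p$ we have $\overline{\gamma}^*t=z^n u(z)$ for a unit $u\in\mathbb{C}[[z]]^\times$ and $n\geq 1$ the order of $\gamma$ at $p$; after absorbing $u$ into a new coordinate (using that units in $\mathbb{C}[[z]]$ have $n$-th roots), $\phi$ becomes $z\mapsto z^n$, so $\phi_*$ is multiplication by $n$ on $\widehat{\mathbb{Z}}$. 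The inclusion $\mathcal{D}_p^*\hookrightarrow X$ sends the topological generator of $\pi_1^{et}(\mathcal{D}_p^*)$ to the profinite class of a small loop around $p$, which in the free presentation $\pi_1(X)=F_{2g}=\langle\alpha_1,\beta_1,\ldots,\alpha_g,\beta_g\rangle$ from the previous lemma is represented by $\sigma_p:=\prod_i[\alpha_i,\beta_i]$. Composing, $\gamma_*(1)=\sigma_p^n$ in $\pi_1^{et}(X)=\widehat{F_{2g}}$.

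It remains to show that $\sigma_p^n\neq 1$ in $\widehat{F_{2g}}$. Since $F_{2g}$ is free, hence torsion-free, and $\sigma_p$ is visibly a non-trivial word, $\sigma_p^n$ is a non-trivial element of $F_{2g}$; by residual finiteness of finitely generated free groups it survives in some finite quotient and therefore has non-trivial image in the profinite completion. I expect the main obstacle to be cosmetic rather than substantive, namely carefully tracking the identification of the inertia generator at $p$ with $\sigma_p$ in the presentation provided by the previous lemma. Indeed, that lemma can be viewed as carrying out precisely this identification at the level of the specific quotient $\pi_1^{et}(X)\twoheadrightarrow S_3$ (under which $\sigma_p$ maps to the non-trivial commutator $[(12),(123)]$); the present corollary strengthens the observation to the statement that $\sigma_p$ has infinite order in $\pi_1^{et}(X)$, which is what one needs to accommodate loops of arbitrary order $n$.
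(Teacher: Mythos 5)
Your proposal is correct, but it takes a genuinely different route from the paper's. The paper first reduces to the case where $\overline{\gamma}$ hits the puncture $\infty$ to order $1$ (factoring off a degree-$n$ cover of $\mathcal{D}^{*}$), and then argues softly: if $\gamma$ pulled the cover $Y\rightarrow X$ of the preceding lemma back to a trivial cover of $\mathcal{D}^{*}$, that cover would extend over $0\in\mathcal{D}$ and hence over $\infty\in\overline{X}$, contradicting non-extendability; the image of the generator of $\widehat{\mathbb{Z}}$ is never identified. You instead compute that image explicitly as (a conjugate of) $\sigma_{p}^{n}$, where $\sigma_{p}=\prod_{i}[\alpha_{i},\beta_{i}]$ is the boundary word, and conclude by torsion-freeness plus residual finiteness of $F_{2g}$, making the paper's $S_{3}$ lemma unnecessary (as you observe, that lemma is precisely residual finiteness applied to $\sigma_{p}$ itself, via the quotient $F_{2g}\twoheadrightarrow S_{3}$). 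The cost of your route is the comparison fact identifying the image of the inertia generator of $\pi_{1}^{et}(\mathcal{D}_{p}^{*})$ with the class of a small topological loop around $p$; this is a standard Riemann-existence-type compatibility, but you assert it rather than prove it (it is of comparable depth to facts the paper also invokes without proof). The gain is substantial: your argument treats all orders $n$ uniformly and yields the stronger conclusion that $\gamma_{*}(1)$ has infinite order, i.e.\ that $\gamma_{*}$ is injective. This in fact repairs a soft spot in the paper's own reduction: pre-composing a non-zero map $\widehat{\mathbb{Z}}\rightarrow\pi_{1}^{et}(X)$ with multiplication by $n$ can perfectly well vanish if the image of the generator is torsion, and the paper's specific cover has local monodromy $[(12),(123)]=(123)$ of order $3$, so that cover alone cannot detect loops whose order is divisible by $3$. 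Your choose-a-finite-quotient-for-each-$n$ argument is exactly what is needed to make that reduction airtight.
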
\begin{proof} Let $\infty\in\overline{X}$ denote the puncture point. We may assume the extension, $\overline{\gamma}:\mathcal{D}\rightarrow\overline{X}$, maps $0$ to $\infty$ to order $1$, since otherwise the result is simply to pre-compose with the map induced on $\pi_{1}^{et}(\mathcal{D}^{*})$ by a degree $n$ cover, which is of course not zero. \\ \\ To show that the induced map,  $\widehat{\mathbb{Z}}=\pi_{1}^{et}(\mathcal{D}^{*})\rightarrow\pi_{1}^{et}(X)$, is non-zero it suffices to show that there is a non-trivial etale cover of $X$ which pulls back to a non-trivial etale cover of $\mathcal{D}^{*}$ under $\gamma$. \\ \\ However, let us note that if $\gamma$ pulls back a cover, $Y\rightarrow X$, to a trivial cover of $\mathcal{D}^{*}$, then the pulled-back cover extends over the puncture point $0\in\mathcal{D}$, whence the original cover extends over the puncture $\infty\in\overline{X}$. We have seen above that there exist covers not extendable in this manner, whence the proof is complete.  \end{proof} \begin{corollary} Let $\gamma_{0}$ be a holomorphic and $\gamma_{1}$ a be non-holomorphic element of $X(\mathcal{D}^{*}_{\mathbb{C}})$, for $X$ as above, then the classes of  $\gamma_{0}$ and $\gamma_{1}$ differ in $\pi_{0}(X(\mathcal{D}^{*}_{\mathbb{C}})).$ Further, two non-holomorphic loops extending to arcs with the same central value lie in the same component iff they have the same degree. \end{corollary}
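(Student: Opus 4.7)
The plan is to upgrade the de Rham argument used for curves with multiple punctures to a non-abelian analogue based on the \'etale fundamental group, with the covers constructed in the preceding corollary supplying the needed invariants.

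First I would record an \'etale analogue of the de Rham lemma stated earlier: for a connected affine $S$ and an $S$-family $\gamma:\mathcal{D}^{*}_{S}\to X$ with $\mathbb{C}$-points $s_{0},s_{1}$, the induced maps $(\gamma_{s_{i}})_{*}:\pi_{1}^{et}(\mathcal{D}^{*})\to\pi_{1}^{et}(X)$ coincide up to conjugation. The key input is that, for any finite \'etale cover $Y\to X$, the decomposition type (the multi-set of connected-component degrees) of the pulled-back finite \'etale cover of $\mathcal{D}^{*}_{S}$ is locally constant on $S$; combined with the ind-affineness of $X(\mathcal{D}^{*})$ exploited in the multi-puncture case, this exhibits the $\pi_{1}^{et}$-pullback as an invariant of the connected component of $\gamma$ in $X(\mathcal{D}^{*}_{\mathbb{C}})$. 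The first assertion of the corollary then follows at once: a holomorphic loop factors through $\mathcal{D}$ and so induces the zero map on $\pi_{1}^{et}$, whereas the preceding corollary supplies a non-zero induced map for any non-holomorphic loop.

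For the second assertion the easy direction goes as follows: two non-holomorphic loops of common degree $n$ at the unique puncture $\infty$ both factor through the inclusion $\mathcal{D}^{*}_{\infty}\hookrightarrow X$ of the formal punctured neighbourhood, and the earlier lemma on $\mathfrak{X}=\mathcal{D}^{*}(\mathcal{D}^{*})$ then furnishes a path between them which pushes forward to $X(\mathcal{D}^{*})$. For the reverse direction I would observe that the induced map $\widehat{\mathbb{Z}}=\pi_{1}^{et}(\mathcal{D}^{*})\to\pi_{1}^{et}(X)$ attached to a loop of degree $n$ factors as multiplication by $n$ on $\widehat{\mathbb{Z}}$ followed by the inertia map $\pi_{1}^{et}(\mathcal{D}^{*}_{\infty})\to\pi_{1}^{et}(X)$, so it is enough to show this inertia map is injective. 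To that end, for each $e\geq 1$ the surjection $\pi_{1}(X)=F_{2g}\to H_{e}$ onto the mod-$e$ Heisenberg group, obtained by sending $\alpha_{1},\beta_{1}$ to standard Heisenberg generators and the remaining $\alpha_{i},\beta_{i}$ to the identity, takes the inertia generator $\prod[\alpha_{i},\beta_{i}]$ to a central commutator of order exactly $e$. Distinct values of $n$ therefore yield distinct maps on $\pi_{1}^{et}$, and the invariance established in the first step finishes the proof.

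The main obstacle I anticipate is the local constancy statement in step one. Though morally parallel to the de Rham lemma invoked earlier, it requires a careful treatment of families of finite \'etale covers over the non-finite-type base $\mathcal{D}^{*}_{S}$; the cleanest route is probably to analyse the primitive idempotents of the finite \'etale $\mathcal{O}(S)((z))$-algebra corresponding to $\gamma^{*}Y$ and to invoke local constancy of their ranks on geometric fibres.
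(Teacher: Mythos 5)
Your proposal follows the same overall strategy as the paper's proof: the invariant is the map induced on $\pi_{1}^{et}$, the reduction to connected affine $S$-families via ind-affineness is the same one used earlier for the de Rham argument, non-vanishing for non-holomorphic loops is quoted from the preceding corollary, and the easy direction (same degree implies same component) is the same factorisation through $\mathcal{D}^{*}_{\infty}$ combined with the lemma on $\mathfrak{X}=\mathcal{D}^{*}(\mathcal{D}^{*})$. Where you diverge is in how the two technical sub-steps are discharged, and the comparison is instructive. First, for the invariance of the induced map along a connected family $\gamma:\mathcal{D}^{*}_{S}\rightarrow X$, the paper simply cites SGA 1 (Exp.\ X together with the tame theory of Exp.\ XIII); you instead propose to prove local constancy of the fibrewise decomposition type of $\gamma^{*}Y$ by analysing idempotents of the finite \'etale $\mathcal{O}(S)((z))$-algebra. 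You are right that this is the crux, and right to be nervous: $\mathcal{O}(S)((z))$ is not of finite type over $\mathbb{C}$, so no off-the-shelf constructibility theorem applies, and carrying out that sketch essentially amounts to re-proving the specialization results the paper outsources to SGA; this is the one genuinely unfinished step of your proposal, though the statement you aim at is correct and your invariant (decomposition types of pullbacks, which are manifestly conjugation-blind) does suffice for everything you later use. Second, and conversely, your Heisenberg argument is a genuine improvement on the paper, which merely asserts that ``we can detect the order from the induced map on $\pi_{1}^{et}$'' without proof. Your surjections $F_{2g}\rightarrow H_{e}$ sending $\prod_{i}[\alpha_{i},\beta_{i}]$ to a central element of order exactly $e$ prove injectivity of the inertia map $\widehat{\mathbb{Z}}=\pi_{1}^{et}(\mathcal{D}^{*}_{\infty})\rightarrow\pi_{1}^{et}(X)$, and centrality of the image disposes of the base-point/conjugation ambiguity. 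This extra work is genuinely needed: the $S_{3}$-quotient constructed in the paper's own lemma sends the inertia element to $[(12),(123)]=(123)$, an element of order $3$, so it cannot separate degrees that agree modulo $3$; some family of finite quotients in which inertia has unbounded order, such as your Heisenberg family, is indispensable, and the paper leaves this point entirely implicit.
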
\begin{proof} Note that a holomorphic loop induces zero on $\pi_{1}^{et}$, further we can detect the order from the induced map on $\pi_{1}^{et}$. As such, using arguments similar to Lemma 4.1 above, it suffices to show the following: if $$\gamma:\mathcal{D}^{*}_{S}\rightarrow X,$$  is an $S$-family of loops, with $S$ connected, and with $\gamma_{0}$ and $\gamma_{1}$  the fibres at  $s_{0}$ and $s_{1}$, then maps induced on $\pi_{1}^{et}$ by the $\gamma_{i}$ agree. This follows from the results of SGA, specifically it can be deduced from the results of [SGA 1, Exp. X, Cor 2.4, Thm 3.8, Cor. 3.9 \& Section 2 of Exp. XIII], many thanks to Jason Starr for the detailed reference. \end{proof} Putting this all together we deduce the main theorem, which we re-state below:  \begin{tcolorbox}\begin{theorem}  let $X$ be a smooth affine curve. If $X=\mathbb{A}^{1}$ then its loop space is contractible. Otherwise the set of connected components of its loop space is naturally identified with $(\mathbb{N}^{\times}\times\partial X)\sqcup \{X(\mathcal{D})\}.$ In particular, the number of connected components of the loop space of such an $X$, up to coverings of the punctured disc, is $1$ plus the number of points needed to compactify $X$.\end{theorem}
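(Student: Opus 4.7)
The plan is to assemble the preceding lemmas and corollaries into a proof of the dichotomy. The case $X=\mathbb{A}^{1}$ is immediate from the explicit description given in the Example: $\mathbb{A}^{1}(\mathcal{D}^{*})\cong\varinjlim_{n}\mathbb{A}^{[-n,\infty)}$ is a colimit of contractible affine spaces under linear closed inclusions, hence contractible in the ind-topology, and in particular connected. So I may focus on $X\neq\mathbb{A}^{1}$ and produce a bijection
\[
\pi_{0}X(\mathcal{D}^{*}_{\mathbb{C}})\;\xrightarrow{\sim}\;(\partial X\times\mathbb{Z}_{>0})\sqcup\{X(\mathcal{D})\}
\]
by means of the invariant sending a non-holomorphic $\gamma$ to the pair $(\overline{\gamma}(0),\operatorname{ord}_{0}\overline{\gamma})\in\partial X\times\mathbb{Z}_{>0}$, and sending any holomorphic loop to the marked point.

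The first task is well-definedness of this invariant on $\pi_{0}$, and this is precisely what the corollaries above achieve, split by whether $\#\partial X$ exceeds $1$. When $\#\partial X>1$, the meromorphic $1$-form with prescribed residues at two punctures, combined with the de Rham continuity lemma, separates loops whose extensions take different central values; the local lemma on $\mathcal{D}^{*}(\mathcal{D}^{*})$ then separates loops of different order at a given puncture and also separates any non-holomorphic loop from a holomorphic one. When $\#\partial X=1$ (forcing $g>0$ since $X\neq\mathbb{A}^{1}$), the abelian argument fails, and one must use the non-abelian $\pi_{1}^{\mathrm{et}}$ together with the $S_{3}$-cover construction and the SGA~1 continuity statement to distinguish holomorphic from non-holomorphic loops; the order invariant is recovered from $\pi_{1}^{\mathrm{et}}(\mathcal{D}^{*})=\widehat{\mathbb{Z}}$.

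Next I would verify surjectivity and injectivity of the invariant. Surjectivity is explicit: for each $p\in\partial X$ and $n\geq 1$, composing the standard degree-$n$ map $\mathcal{D}^{*}\to\mathcal{D}^{*}$ with the tautological inclusion $\mathcal{D}^{*}_{p}\hookrightarrow X$ yields a loop with the prescribed invariant. Injectivity amounts to connectedness of each fibre of the invariant. The holomorphic fibre is connected because $X(\mathcal{D})$ is an integral affine scheme. For the fibre over $(p,n)$, the unique factorisation of a loop through the formal punctured neighbourhood $\mathcal{D}^{*}_{p}\hookrightarrow X$ realises the fibre as the degree-$n$ stratum of $\mathcal{D}^{*}(\mathcal{D}^{*})$, which was identified above with $\mathbb{C}^{\times}\times\mathbb{C}^{\infty}$ and is therefore connected.

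Finally, to pass from $\pi_{0}$ to $\pi_{0}^{\mathbb{N}^{\times}}$, observe that the covering action fixes the holomorphic class and, on each $\{p\}\times\mathbb{Z}_{>0}$, acts by multiplication on the order; hence each such copy of $\mathbb{Z}_{>0}$ collapses to a single orbit, and we obtain a set of cardinality $1+\#\partial X$. The only genuinely difficult step in this programme is the well-definedness of the invariant in the one-puncture positive genus case; everything else is either an explicit computation, an easy connectedness statement, or an application of the lemmas already established.
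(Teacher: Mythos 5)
Your proposal is correct and takes essentially the same route as the paper: the paper's own proof of this theorem is simply the assembly of the preceding results (the residue/de Rham argument for $\#\partial X>1$, the \'etale $\pi_{1}$ and SGA continuity argument for the one-puncture, positive-genus case, the local computation on $\mathcal{D}^{*}(\mathcal{D}^{*})$ for the order invariant, connectedness of the arc space, and finally the quotient by the $\mathbb{N}^{\times}$-action), which is exactly what you do. Your explicit packaging of this as well-definedness, surjectivity, and injectivity (fibre-connectedness) of the invariant $\gamma\mapsto(\overline{\gamma}(0),\mathrm{ord}_{0}\overline{\gamma})$ is a slightly more detailed organization than the paper's terse ``putting this all together,'' but the mathematical content is identical.
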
\end{tcolorbox}

\end{document}